\documentclass[11pt, reqno]{amsart}
\allowdisplaybreaks[3]

\usepackage[indentafter]{titlesec}
\usepackage{enumerate}
\usepackage{mathrsfs}
\usepackage{amsopn,amscd,amsmath,amssymb,amsthm,amsfonts,epsfig,graphics}
\usepackage{pslatex,avant,color}
\usepackage{cite}

\theoremstyle{plain}
\newtheorem{thm}{Theorem}[section]

\newtheorem{cor}{Corollary}[section]
\newtheorem{propo}{Proposition}[section]
\newtheorem{defi}{Definition}[section]
\newtheorem{exm}{Example}[section]

\theoremstyle{remark}
\newtheorem{rem}{Remark}[section]

\titleformat{name=\section}{}{\thetitle.}{0.8em}{\centering\scshape}
\titleformat{name=\subsection}{}{\thetitle.}{0.5em}{\bfseries}[]

\numberwithin{equation}{section}
\begin{document}

\title[Construction of Pole Cancellation Functions]{Construction of Pole Cancellation Functions at Ordinary Poles of Operator-Valued Functions}
\author{Muhamed Borogovac}

\begin{abstract} 
A pole of order $m \in \mathbb{N}$ at $\beta \in \mathbb{C}$ of a regular operator valued function $Q : \mathcal{D}(Q) \to \mathcal{L}(\mathcal{H})$ is investigated. We provide a characterization of pole cancellation functions $\boldsymbol{\psi}(z)$ of $Q(z)$ of order $k \le m$ at $\beta$ in terms of the coefficients of the Laurent expansion of $Q$. This characterization yields practical and explicit constructions of pole cancellation functions $\boldsymbol{\psi}(z)$. Moreover, it leads to an explicit formula for the associated functions $\boldsymbol{\hat{\varphi}}(z) := Q(z)\boldsymbol{\psi}(z)$, which are root functions of order $k$ at the zero $\beta$ of $Q^{-1}$. The results are illustrated by an example.
\end{abstract}

\subjclass[2020]{47A56, 30D30}
\keywords{Operator-valued functions, Matrix-valued functions, Pole cancellation functions, Root functions, Generalized Jordan vectors.} 

\maketitle
\thispagestyle{empty}
\section{Introduction}\label{s2}

\textbf{1.1.}  Let $\mathbb{N}$, $\mathbb{R}$, and $\mathbb{C}$ denote the sets of positive integers, real numbers, and complex numbers, respectively. The extended complex plane is defined as $\bar{\mathbb{C}} := \mathbb{C} \cup \{\infty\}$. Let $\mathcal{H}$ denote a Hilbert space, and let $\mathcal{L}(\mathcal{H})$ denote the Banach space of bounded linear operators on $\mathcal{H}$. 

We assume that $\mathcal{D}(Q) \subseteq \mathbb{C}$ is a domain, and that $Q : \mathcal{D}(Q) \to \mathcal{L}(\mathcal{H})$ is a holomorphic operator-valued function on $\mathcal{D}(Q)$, that is, the domain of $Q$ coincides with its domain of holomorphy. We study poles of such functions. Obviously, these functions may have other kinds of singularities outside their domains of meromorphy, such as generalized poles of generalized Nevanlinna-Herglotz functions; see, for example, \cite{Lu3}. For the study of generalized poles of generalized Nevanlinna-Herglotz
functions, operator and integral representations of these functions are required, which are not available in the general case of operator-valued functions.

If an operator-valued function $Q(z)$ is boundedly invertible at least at one point of holomorphy $w \in \mathcal{D}(Q)$, then it is boundedly invertible on some domain in $\mathbb{C}$ and is called a \textit{regular} operator-valued function on that domain; see, e.g., \cite[p.~327]{Lu4}. If $\mathcal{H} = \mathbb{C}^{n}$, then we deal with matrix-valued functions. Evidently, a matrix-valued function is regular if and only if $\chi(z) := \det Q(z) \not\equiv 0$.

Recall that an isolated singularity $\beta \in \mathbb{C}$ is called a \textit{pole of order} $m \in \mathbb{N}$ of the operator-valued function $Q(z)$ if $m$ is the minimal number such that the function $\tilde{Q}(z) := (z-\beta)^{m} Q(z)$ is holomorphic at $\beta$. Then $Q(z)$ admits a Laurent expansion
\[
Q(z) = \sum_{j = -m}^{\infty} Q_{j} (z- \beta)^{j},
\]
in some neighborhood of $\beta$ excluding $\beta$, where each coefficient $Q_j$ is a bounded linear operator, that is, it belongs to $\mathcal{L}(\mathcal{H})$. Explicitly,
\begin{equation}
\label{eq22}
Q(z) = \frac{Q_{-m}}{(z- \beta)^{m}} + \cdots + \frac{Q_{-1}}{(z- \beta)} + Q_{0} + Q_{1}(z- \beta) + Q_{2}(z- \beta)^{2} + \cdots ,
\end{equation}
where the fractional part is called the \textit{principal part of the Laurent expansion}, $Q_{-m}$ is called the \textit{leading principal coefficient}, and $Q_{-1}$ is called the \textit{residue operator}. The coefficients of the expansion \eqref{eq22} are easily derived using the Taylor expansion of the following function, which is holomorphic at $\beta$:
\[
\tilde{Q}(z) := (z- \beta)^{m} Q(z) = \sum_{j = -m}^{\infty} Q_{j} (z- \beta)^{m+j}.
\]
After the substitution $i = m + j$, we obtain
\begin{equation}
\label{eq24}
\tilde{Q}(z) = \sum_{i = 0}^{\infty} Q_{-m+i} (z- \beta)^{i}.
\end{equation}
The Taylor expansion of $\tilde{Q}$ around $\beta$ is
\begin{equation}
\label{eq26}
\tilde{Q}(z) = \sum_{i = 0}^{\infty} \frac{1}{i!} \tilde{Q}^{(i)}(\beta) (z- \beta)^{i}
\end{equation}
\[
= \sum_{i = 0}^{\infty} \frac{1}{i!} \left[(z- \beta)^{m} Q(z)\right]^{(i)}(\beta) (z- \beta)^{i}.
\]
From \eqref{eq24} and \eqref{eq26}, we obtain
\begin{equation}
\label{eq28}
Q_{-m+i} = \frac{1}{i!} \tilde{Q}^{(i)}(\beta)
= \frac{1}{i!} \left[(z- \beta)^{m} Q(z)\right]^{(i)}(\beta),
\quad i = 0, 1, 2, \ldots .
\end{equation}

\textbf{1.2.} Unlike scalar functions, for which all zeros and poles are characterized solely by their orders, operator-valued functions exhibit a more intricate structure: zeros are characterized by finite-dimensional spaces of vector functions, called root functions, whereas poles are characterized by finite-dimensional spaces of pole cancellation functions. For this purpose, we use the following definitions of a root function and related concepts, which are consistent with the corresponding definitions in \cite{GS, B1, BLu, GT}.

\begin{defi}\label{definition22}
Let $Q : \mathcal{D}(Q) \rightarrow \mathcal{L}(\mathcal{H})$ be an operator-valued function holomorphic on $\mathcal{D}(Q)$. Suppose that the vector function $\boldsymbol{\varphi}(z)$ satisfies the following conditions at some point $\alpha \in \mathbb{C}$:
\begin{enumerate}[(a)]
\item $\boldsymbol{\varphi}(\alpha) \ne 0$;
\item $(Q(z)\boldsymbol{\varphi}(z))^{(l)} \to 0$ as $z \to \alpha$, for $0 \le l \le m-1$, where $l, m-1 \in \mathbb{N} \cup \{0\}$.
\end{enumerate}
Then $\boldsymbol{\varphi}(z)$ is called a \textit{root function} of $Q(z)$ of order at least $m$ at the critical point (or zero) $\alpha$. If $m$ is the maximal number for which condition~(b) holds, then $\boldsymbol{\varphi}(z)$ is said to be a root function of \textit{exact order} (or \textit{exact partial multiplicity}) $m$ at $\alpha$.

The vector $\boldsymbol{\varphi}(\alpha)$ is called an \textit{eigenvector corresponding to the eigenvalue} $\alpha$. If $\boldsymbol{\varphi}(z)$ is a root function of maximal order $m$ among all root functions with the same eigenvector $\boldsymbol{\varphi}(\alpha)$, then $\boldsymbol{\varphi}(z)$ is called a \textit{canonical root function} associated with $\boldsymbol{\varphi}(\alpha)$. The sum of the orders of all linearly independent canonical root functions at $\alpha$ is called the (total) multiplicity of the zero at $\alpha$.

The maximal order among all canonical root functions at $\alpha$ is called the \textit{order of the zero} at $\alpha$.
\end{defi}

Pole cancellation functions are closely related to root functions. They were introduced and extensively studied in the investigation of an important subclass of operator-valued functions, namely generalized Nevanlinna functions; see, for example, \cite{DLS, BLu}.
\begin{defi}\label{definition24}
Let $Q : \mathcal{D}(Q) \rightarrow \mathcal{L}(\mathcal{H})$ be an operator-valued function holomorphic on $\mathcal{D}(Q)$. Suppose that the vector functions $\boldsymbol{\hat{\varphi}}(z)$ and $\boldsymbol{\psi}(z)$ satisfy the following conditions at a pole $\beta \in \mathbb{C}$ of $Q$:
\begin{enumerate}[(a)]
\item $\boldsymbol{\hat{\varphi}}(z) := Q(z)\boldsymbol{\psi}(z) \to c := \boldsymbol{\hat{\varphi}}(\beta) \ne 0$, $c \ne \infty$, as $z \to \beta$;
\item $\boldsymbol{\psi}^{(l)}(z) \to 0$ as $z \to \beta$, for $0 \le l \le m-1$, where $l, m-1 \in \mathbb{N} \cup \{0\}$.
\end{enumerate}
The functions $\boldsymbol{\hat{\varphi}}(z)$ and $\boldsymbol{\psi}(z)$ are called a \textit{pole function} and a \textit{pole cancellation function}, respectively, of $Q(z)$ at $\beta$ of order at least $m$. If $m$ is the maximal number for which condition~(b) holds for a given vector $\boldsymbol{\hat{\varphi}}(\beta)$, then $\boldsymbol{\hat{\varphi}}(z)$ and $\boldsymbol{\psi}(z)$ are called the \textit{pole function} and the \textit{pole cancellation function}, respectively, of $Q(z)$ at $\beta$ of \textit{exact order} $m$.

If $\boldsymbol{\psi}(z)$ is a pole cancellation function of maximal order $m$ among all pole cancellation functions associated with the same limit $\boldsymbol{\hat{\varphi}}(\beta)$, then $\boldsymbol{\psi}(z)$ is called the \textit{canonical pole cancellation function} associated with the vector $\boldsymbol{\hat{\varphi}}(\beta)$. The sum of the orders of all linearly independent canonical pole cancellation functions at $\beta$ is called the (total) multiplicity of the pole at $\beta$.

The maximal order among all canonical pole cancellation functions at $\beta$ coincides with the \textit{order of the pole} at $\beta$.
\end{defi}

By definition, $\beta = \infty$ \textit{is a pole} (respectively, a \textit{zero}) of $Q$ if and only if $0$ is a pole (respectively, a zero) of $G(\zeta) := Q(1/\zeta)$. Hence, all of the preceding definitions and considerations extend naturally to the pole $\beta = \infty$ of $Q$ via the function $G(\zeta)$ at $\zeta = 0$.

If an operator-valued function $Q(z)$ is boundedly invertible on some domain, that is, regular, and if $\boldsymbol{\psi}(z)$ is a pole cancellation function of $Q$ at the pole $\beta$ of order $k$, then
\begin{equation}
\label{eq210}
\boldsymbol{\hat{\varphi}}(z) := Q(z)\boldsymbol{\psi}(z)
\end{equation}
is a root function of $\hat{Q}(z) := Q(z)^{-1}$ at the zero $\beta$ of the same order $k$. In this way, all of the above definitions for a pole $\beta$ of $Q$ can be applied to a zero $\beta$ of $Q^{-1}$.

Therefore, if $Q(z)$ is a regular operator-valued function, there is a one-to-one correspondence between root functions $\boldsymbol{\hat{\varphi}}(z)$ of $Q^{-1}(z)$ and pole cancellation functions $\boldsymbol{\psi}(z)$ of $Q(z)$ at $\beta$.

Recall that pole cancellation functions were studied in a subclass of operator-valued functions, namely the generalized Nevanlinna class; see, for example, \cite{Lu3}. Since generalized Nevanlinna functions admit integral and operator representations, such representations have been employed in the construction of root and pole cancellation functions. In particular, pole cancellation functions have been characterized in terms of the Jordan vectors of the representing operators or relations of those generalized Nevanlinna functions. Subsequently, root and pole cancellation functions were used in the study of generalized zeros and poles of generalized Nevanlinna functions; see \cite{BLu}.

In this paper, we deal with the general class of operator-valued functions. Therefore, we cannot use operator or integral representations and must rely on other methods. Instead, we use Laurent and Taylor expansions.

Assume that $\beta \in \mathbb{C}$ is an ordinary pole of a regular operator-valued function $Q$. Then $Q$ admits the Laurent expansion \eqref{eq22} at $\beta$. The following question naturally arises: how can one find a canonical system of pole cancellation functions of $Q$ and the corresponding root functions at the zero $\beta$ of $Q^{-1}$ in terms of the operators appearing in the Laurent expansion \eqref{eq22}? In other words, for $k \le m$, we seek to construct a pole cancellation function of order $k$ and the corresponding root function of $Q^{-1}$, provided that these functions of the given order $k$ exist. To this end, we first establish a characterization of pole cancellation functions of $Q$ at a pole $\beta$; see Theorem~\ref{theorem42}, Corollary~\ref{corollary44}, and Proposition~\ref{proposition46}.

This type of material, which enables concrete applications, is best understood through examples. For this reason, we conclude by illustrating the theoretical results with an explicit example.
\section{Construction of pole cancellation functions}\label{s4} 
\begin{thm}\label{theorem42}
Let $Q : \mathcal{D}(Q) \rightarrow \mathcal{L}(\mathcal{H})$ be an operator-valued function holomorphic on $\mathcal{D}(Q)$, and let $\beta \in \mathbb{C}$ be a pole of $Q(z)$ of order $m \in \mathbb{N}$.

If a function $\boldsymbol{\psi}(z)$ is a pole cancellation function of $Q(z)$ at $\beta$ of order $k \in \mathbb{N}$, $k < m$, written in the form 
\begin{equation}\label{eq42}
\boldsymbol{\psi}(z) = \sum_{i=k}^{m+k-1} (z - \beta)^{i} \boldsymbol{\psi}_{i} +  O(z - \beta)^{m+k}, \quad \boldsymbol{\psi}_{k}\neq 0, 
\end{equation}
then it satisfies the conditions
\begin{equation}\label{eq44}
\sum_{i=k}^{m-s} Q_{-s-i} \boldsymbol{\psi}_{i} = 0, \quad s = m-k, m-k-1, \dots, 1,
\end{equation}
\begin{equation}\label{eq46}
\sum_{i=k}^{m} Q_{-i} \boldsymbol{\psi}_{i} \neq 0.
\end{equation}

Recall that $O(z - \beta)^{t} \in \mathcal{H}$, $t>0$, denotes a function satisfying, for some $M>0$,
\[
|O(z - \beta)^{t}| \leq M |z - \beta|^{t} \quad \text{as} \quad z \to \beta.
\]

Conversely, if the function $\boldsymbol{\psi}(z)$ given by \eqref{eq42} satisfies conditions \eqref{eq44} and \eqref{eq46}, then $\boldsymbol{\psi}(z)$ is a pole cancellation function of $Q(z)$ at $\beta$ of order $l, k\le l \le m$.
\end{thm}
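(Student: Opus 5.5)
The plan is to multiply the Laurent expansion \eqref{eq22} of $Q$ by the expansion \eqref{eq42} of $\boldsymbol{\psi}$ and read off the coefficients of the product $\boldsymbol{\hat{\varphi}}(z)=Q(z)\boldsymbol{\psi}(z)$. Since $\boldsymbol{\psi}$ has a zero of order $k$ at $\beta$, condition (b) of Definition~\ref{definition24} with order $k$ holds exactly when the Taylor coefficients $\boldsymbol{\psi}_0,\dots,\boldsymbol{\psi}_{k-1}$ vanish. This is why \eqref{eq42} starts at $i=k$.

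\textbf{Computing the coefficients.} Writing
\[
Q(z)\boldsymbol{\psi}(z)=\sum_{j\ge -m}\sum_{i\ge k} Q_j\boldsymbol{\psi}_i (z-\beta)^{i+j},
\]
the coefficient of $(z-\beta)^{-s}$ for $s\ge 0$ is $\sum_{i} Q_{-s-i}\boldsymbol{\psi}_i$. The sum runs over $k\le i$ with $-s-i\ge -m$, that is, $i\le m-s$.

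\begin{itemize}
\item The most negative power is $(z-\beta)^{k-m}$, i.e.\ $s=m-k$.
\item For $s\ge 1$ the coefficients involve only $\boldsymbol{\psi}_k,\dots,\boldsymbol{\psi}_{m-1}$.
\item For $s=0$ the coefficient involves $\boldsymbol{\psi}_k,\dots,\boldsymbol{\psi}_m$; since $k<m$, all of these lie inside the explicit part of \eqref{eq42}.
\item The remainder $O(z-\beta)^{m+k}$, multiplied by $Q(z)=O(z-\beta)^{-m}$, gives $O(z-\beta)^{k}$, which tends to $0$. Terms with $i+j\ge 1$ in the explicit part also tend to $0$.
\end{itemize}

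Hence
\[
Q(z)\boldsymbol{\psi}(z)=\sum_{s=1}^{m-k}(z-\beta)^{-s}\sum_{i=k}^{m-s}Q_{-s-i}\boldsymbol{\psi}_i+\sum_{i=k}^{m}Q_{-i}\boldsymbol{\psi}_i+o(1).
\]

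\textbf{Direct implication.} If $\boldsymbol{\psi}$ is a pole cancellation function, then $Q\boldsymbol{\psi}$ has a finite nonzero limit $c$. Finiteness forces every negative-power coefficient to vanish. I would argue this from the highest singular power $s=m-k$ downward: multiply by $(z-\beta)^{s}$ and let $z\to\beta$, which yields \eqref{eq44}. The limit is then the constant term, so $c\ne0$ gives \eqref{eq46}.

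\textbf{Converse.} Under \eqref{eq44} and \eqref{eq46}, the expansion above shows $Q\boldsymbol{\psi}\to c:=\sum_{i=k}^{m}Q_{-i}\boldsymbol{\psi}_i\ne 0$, finite. Also $\boldsymbol{\psi}^{(l)}(\beta)=0$ for $l<k$, so $\boldsymbol{\psi}$ is a pole cancellation function of order at least $k$.

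The upper bound $l\le m$ is the delicate point. With $\boldsymbol{\psi}_k\neq0$ the exact vanishing order of $\boldsymbol{\psi}$ is literally $k$, so the statement must refer to the order in the sense of Definition~\ref{definition24}. That order is the maximal order among pole cancellation functions sharing the same value $\boldsymbol{\hat{\varphi}}(\beta)$, and it may exceed $k$.

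The bound $l\le m$ then follows from the remark after Definition~\ref{definition24} that the maximal order equals the pole order. It can also be seen directly: if $\boldsymbol{\psi}$ vanished to order $>m$, then $Q\boldsymbol{\psi}\to0$ since $Q=O(z-\beta)^{-m}$, contradicting $c\ne0$.

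\textbf{Main obstacle.} The step needing care is the bookkeeping of which $\boldsymbol{\psi}_i$ enter each coefficient, and confirming that the truncation at $m+k-1$ in \eqref{eq42} loses nothing. The index $m$ appears in \eqref{eq46} and $m+k-1\ge m$, so it does not. The rest is routine.
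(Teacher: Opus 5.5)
Your proof is correct and follows essentially the paper's route: expand $Q(z)\boldsymbol{\psi}(z)$ as in \eqref{eq48}, force the singular coefficients to vanish, and identify the constant term with $\boldsymbol{\hat{\varphi}}(\beta)$; your successive multiplication by $(z-\beta)^{s}$ is a more explicit version of the paper's comparison with the Taylor expansion of $\boldsymbol{\hat{\varphi}}$. For the bound $l\le m$, the paper reads $l$ as the vanishing order of $\boldsymbol{\psi}$ (tacitly allowing $\boldsymbol{\psi}_k=0$) and notes that \eqref{eq46} forces some $\boldsymbol{\psi}_l\neq 0$ with $k\le l\le m$; this is the same observation as your ``vanishing order $>m$ implies $Q\boldsymbol{\psi}\to 0$,'' so your argument covers that reading too.
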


\begin{proof}
Assume that $\beta \in \mathcal{D}(Q)$ is a pole of $Q(z)$ of order $m \in \mathbb{N}$, and that $\boldsymbol{\psi}(z)$ given by \eqref{eq42} is a pole cancellation function of $Q$ at $\beta$ of order $k < m$. According to \eqref{eq22} and \eqref{eq42}, in some neighborhood of $\beta$ excluding $\beta$, we have
\[
\boldsymbol{\hat{\varphi}}(z) := Q(z) \boldsymbol{\psi}(z) =
\]
\[
= \left( \sum_{j=1}^{m} \frac{Q_{-j}}{(z-\beta)^j} + Q_{0}(z) \right) 
\left( \sum_{i=k}^{m+k-1} (z - \beta)^{i} \boldsymbol{\psi}_{i} + O(z - \beta)^{m+k} \right) =
\]
\[
= \sum_{j=1}^{m} \sum_{i=k}^{m+k-1} \frac{Q_{-j} \boldsymbol{\psi}_{i}}{(z-\beta)^{j-i}} + O(z - \beta)^{k}.
\]

After the substitution $n = i - j$, we obtain
\[
\boldsymbol{\hat{\varphi}}(z) = \sum_{n=-m+k}^{-1} \left( \sum_{i=k}^{n+m} Q_{n-i} \boldsymbol{\psi}_{i} \right) (z - \beta)^{n} 
+ \sum_{n=0}^{k-1} \left( \sum_{i=k}^{n+m} Q_{n-i} \boldsymbol{\psi}_{i} \right) (z - \beta)^{n} + O(z - \beta)^{k}.
\]

After the substitution $s = -n$ in the first sum, we have
\begin{equation}\label{eq48}
\boldsymbol{\hat{\varphi}}(z) = \sum_{s=m-k}^{1} \left( \sum_{i=k}^{m-s} Q_{-s-i} \boldsymbol{\psi}_{i} \right) \frac{1}{(z - \beta)^{s}}
+ \sum_{n=0}^{k-1} \left( \sum_{i=k}^{n+m} Q_{n-i} \boldsymbol{\psi}_{i} \right) (z - \beta)^{n} + O(z - \beta)^{k}.
\end{equation}

Since $\boldsymbol{\psi}(z)$ is a pole cancellation function by assumption, $\boldsymbol{\hat{\varphi}}(z)$ is holomorphic in a neighborhood of $\beta$ with $\boldsymbol{\hat{\varphi}}(\beta) \neq 0$. Therefore,
\begin{equation}\label{eq410}
\boldsymbol{\hat{\varphi}}(z) = \boldsymbol{\hat{\varphi}}(\beta) + (z-\beta)\boldsymbol{\hat{\varphi}}'(\beta) + \cdots + \frac{1}{(k-1)!}(z-\beta)^{k-1} \boldsymbol{\hat{\varphi}}^{(k-1)}(\beta) + O(z-\beta)^{k}.
\end{equation}

Comparing \eqref{eq48} with \eqref{eq410} implies that the fractional part of \eqref{eq48} must vanish, and the constant term $\boldsymbol{\hat{\varphi}}(\beta) \neq 0$. Hence, for $k < m$, the conditions \eqref{eq44} and \eqref{eq46} must hold. 

Conversely, assume that the function $\boldsymbol{\psi}(z)$ given by \eqref{eq42} satisfies condition \eqref{eq44}. Then the right-hand side of \eqref{eq48} is holomorphic in a neighborhood of $\beta$. By definition, the function $\boldsymbol{\psi}(z)$ is therefore a pole cancellation function of $Q$ at $\beta$.

Moreover, by assumption, the function $\boldsymbol{\psi}(z)$ satisfies condition \eqref{eq46}. Hence, 
\[
\boldsymbol{\hat{\varphi}}(\beta) = \sum_{i=k}^{m} Q_{-i} \boldsymbol{\psi}_{i} \neq 0.
\]
This means that for at least one $l$, such that $k \le l \le m$, the vector $\boldsymbol{\psi}_{l} \neq 0 $. By definition, the function $\boldsymbol{\psi}(z)$ given by (\ref{eq42}) is of order $l$.  

This completes the proof of the theorem.
\end{proof} 

\begin{cor}\label{corollary44} Let $Q : \mathcal{D}(Q) \rightarrow \mathcal{L}(\mathcal{H})$ be an operator-valued function holomorphic on $\mathcal{D}(Q)$, and let $\beta \in \mathbb{C}$ be a pole of $Q(z)$ of order $m \in \mathbb{N}$.

The function $\boldsymbol{\psi}(z)$ given by \eqref{eq42} is a pole cancellation function of $Q(z)$ at $\beta$ of order $k = m$
if and only if it satisfies
\begin{equation}\label{eq412}
Q_{-m} \boldsymbol{\psi}_{m} \neq 0.
\end{equation}
\end{cor}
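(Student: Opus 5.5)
Set $k=m$ in the computation from the proof of Theorem~\ref{theorem42}. For $k=m$, expansion \eqref{eq42} becomes
\[
\boldsymbol{\psi}(z)=\sum_{i=m}^{2m-1}(z-\beta)^i\boldsymbol{\psi}_i+O(z-\beta)^{2m},
\]
so $\boldsymbol{\psi}$ vanishes to order at least $m$ at $\beta$. Multiplying by the Laurent expansion \eqref{eq22} gives, as in \eqref{eq48},
\[
\boldsymbol{\hat{\varphi}}(z)=Q(z)\boldsymbol{\psi}(z)=\sum_{n=0}^{m-1}\Bigl(\sum_{i=m}^{n+m}Q_{n-i}\boldsymbol{\psi}_i\Bigr)(z-\beta)^n+O(z-\beta)^m .
\]
Since $i\ge m\ge j$, every product $Q_{-j}\boldsymbol{\psi}_i(z-\beta)^{i-j}$ has a nonnegative exponent. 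The system \eqref{eq44} is indexed by $s=m-k,\dots,1$, which is empty for $k=m$, so $\boldsymbol{\hat{\varphi}}$ is automatically holomorphic at $\beta$. Its value there is the $n=0$ coefficient, $\boldsymbol{\hat{\varphi}}(\beta)=Q_{-m}\boldsymbol{\psi}_m$. This is the $k=m$ case of \eqref{eq46}: the sum $\sum_{i=k}^{m}Q_{-i}\boldsymbol{\psi}_i$ collapses to a single term.

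For the forward direction, assume $\boldsymbol{\psi}$ is a pole cancellation function of order $m$. Then condition (a) of Definition~\ref{definition24} requires $\boldsymbol{\hat{\varphi}}(\beta)\neq0$, which by the formula above is exactly \eqref{eq412}.

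For the converse, assume $\boldsymbol{\psi}$ has the form \eqref{eq42} with $k=m$ and satisfies \eqref{eq412}. Then $\boldsymbol{\hat{\varphi}}$ has a finite nonzero limit at $\beta$, so (a) holds. Condition (b) holds for $0\le l\le m-1$ because $\boldsymbol{\psi}(z)=O(z-\beta)^m$, and its derivatives up to order $m-1$ therefore vanish at $\beta$. Thus $\boldsymbol{\psi}$ is a pole cancellation function of order at least $m$.

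It remains to show the order is exactly $m$. By Definition~\ref{definition24}, the maximal order of any pole cancellation function equals the order of the pole, which is $m$, so the order cannot exceed $m$. A direct argument gives the same bound. If $\boldsymbol{\psi}$ vanished to order $m+1$, then $\boldsymbol{\psi}_m=0$ in the Taylor expansion, and $Q(z)\boldsymbol{\psi}(z)=O(z-\beta)$, giving $\boldsymbol{\hat{\varphi}}(\beta)=0$. This contradicts \eqref{eq412}, which forces $\boldsymbol{\psi}_m\neq0$.

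The only delicate step is this last identification of the exact order with $m$. It has to be handled via the pole-order bound, because \eqref{eq42} only asserts $\boldsymbol{\psi}_k\neq0$ and says nothing about higher vanishing.
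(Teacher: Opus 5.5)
Your proof is correct and follows the same route as the paper: specializing \eqref{eq48} to $k=m$ makes the principal-part sum (i.e.\ the system \eqref{eq44}) empty, and $\boldsymbol{\hat{\varphi}}(\beta)=Q_{-m}\boldsymbol{\psi}_{m}$ is just \eqref{eq46} collapsed to \eqref{eq412}; you simply write out both directions and condition (b) more explicitly than the paper does. One correction to your closing remark: the exact-order step is not delicate, because $\boldsymbol{\psi}_{m}\neq 0$ is already part of \eqref{eq42} for $k=m$ (and is forced by \eqref{eq412} anyway), so $\boldsymbol{\psi}^{(m)}(\beta)=m!\,\boldsymbol{\psi}_{m}\neq 0$ fixes the order at exactly $m$ without appealing to the pole-order assertion in Definition~\ref{definition24}.
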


\begin{proof}
If $k = m$, then the fractional part of the product $\boldsymbol{\hat{\varphi}}(z) := Q(z)\boldsymbol{\psi}(z)$ does not exist. Therefore, the first sum in \eqref{eq48} does not appear, i.e., condition \eqref{eq44} is not applicable. Clearly, condition \eqref{eq46} reduces to \eqref{eq412}.
\end{proof}

\begin{propo}\label{proposition46}
Let $Q: \mathcal{D}(Q) \rightarrow \mathcal{L}(\mathcal{H})$ be an regular operator-valued function holomorphic on $\mathcal{D}(Q)$, and let $\beta \in \mathbb{C}$ be a pole of $Q(z)$ of order $m \in \mathbb{N}$. Let the function $\boldsymbol{\psi}(z)$ be a pole cancellation function of 
$Q(z)$ at $\beta$ of order $k \in \mathbb{N}, k\le m$, given in the form 
(\ref{eq42}). Then the corresponding root function of $Q^{-1}(z)$ at the zero $\beta$ is given by 
\begin{equation}\label{eq414} 
\boldsymbol{\hat{\varphi}}(z)=\boldsymbol{\hat{\varphi}}_{0}+(z-\beta)\boldsymbol{\hat{\varphi}}_{1}+...+(z-\beta)^{k-1}\boldsymbol{\hat{\varphi}}_{k-1}+O(z-\beta)^{k},
\end{equation}
where the vectors
\[
\boldsymbol{\hat{\varphi}}_{0},\boldsymbol{\hat{\varphi}}_{1},...,\boldsymbol{\hat{\varphi}}_{k-1},
\]
are given by
\begin{equation}\label{eq416} 
\boldsymbol{\hat{\varphi}}_{n}= \sum_{i=k}^{n+m}Q_{n-i}\psi_{i}, \quad n=0, 1,...,k-1.
\end{equation}
\end{propo}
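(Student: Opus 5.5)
The plan is to reuse the expansion \eqref{eq48} obtained in the proof of Theorem~\ref{theorem42}, and then identify the Taylor coefficients of $\boldsymbol{\hat{\varphi}}(z) := Q(z)\boldsymbol{\psi}(z)$ by uniqueness of expansion.

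First, we write $Q(z)$ by its Laurent expansion \eqref{eq22}, keeping the holomorphic part as $\sum_{j\ge 0} Q_j (z-\beta)^j$. We also write $\boldsymbol{\psi}(z)$ in the form \eqref{eq42}. Multiplying, we collect the coefficient of $(z-\beta)^n$ for $n=-m+k,\dots,k-1$. A product $Q_{j}\boldsymbol{\psi}_i (z-\beta)^{j+i}$ contributes to the power $n$ exactly when $j=n-i$. The constraint $j\ge -m$ gives $i\le n+m$, and $i\ge k$ from \eqref{eq42}. Since $n\le k-1$, we have $n+m\le m+k-1$, so every needed $\boldsymbol{\psi}_i$ actually appears in the finite part of \eqref{eq42}. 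The remainder $O(z-\beta)^{m+k}$ multiplied by $Q(z)=O(z-\beta)^{-m}$ contributes only $O(z-\beta)^k$. Likewise, the terms with $j+i\ge k$ are $O(z-\beta)^k$. This reproduces \eqref{eq48}, now with the terms $Q_{n-i}$ for $n-i\ge 0$ included, since they are the regular coefficients.

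Second, since $\boldsymbol{\psi}$ is a pole cancellation function of order $k$, the singular coefficients (those with $s=m-k,\dots,1$) vanish. This is the necessity part of Theorem~\ref{theorem42} for $k<m$. For $k=m$ no singular terms occur, as in Corollary~\ref{corollary44}. Hence $\boldsymbol{\hat{\varphi}}$ extends holomorphically to $\beta$. Comparing with its Taylor expansion \eqref{eq410} and using uniqueness of power series coefficients, we obtain
\[
\boldsymbol{\hat{\varphi}}_n = \frac{1}{n!}\boldsymbol{\hat{\varphi}}^{(n)}(\beta)=\sum_{i=k}^{n+m} Q_{n-i}\boldsymbol{\psi}_i ,\qquad n=0,\dots,k-1 .
\]
This is exactly \eqref{eq414}--\eqref{eq416}.

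Finally, we verify that this $\boldsymbol{\hat{\varphi}}$ is a root function of $Q^{-1}$ at $\beta$ of order $k$. By regularity, $Q^{-1}(z)\boldsymbol{\hat{\varphi}}(z)=\boldsymbol{\psi}(z)$ near $\beta$ (off $\beta$), and it extends holomorphically. By (b) of Definition~\ref{definition24}, its derivatives up to order $k-1$ vanish at $\beta$. Also $\boldsymbol{\hat{\varphi}}(\beta)=\boldsymbol{\hat{\varphi}}_0\ne0$ by \eqref{eq46}, or \eqref{eq412} when $k=m$. So Definition~\ref{definition22} applies, as already noted around \eqref{eq210}.

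The main obstacle is the bookkeeping in the remainder estimate. One must ensure that the truncation of $\boldsymbol{\psi}$ at degree $m+k-1$ suffices, that is, that no omitted terms of $\boldsymbol{\psi}$ affect coefficients of degree $\le k-1$. One must also treat the holomorphic part of $Q$ as genuinely contributing, via the $Q_{n-i}$ with $n-i\ge0$. A further point is that $Q^{-1}$ may be undefined at $\beta$ itself, so the root-function conditions should be read as limits $z\to\beta$.
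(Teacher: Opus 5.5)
Your proposal is correct and takes essentially the paper's route: expand $Q(z)\boldsymbol{\psi}(z)$ as in \eqref{eq48}, use the pole cancellation property to eliminate the singular sum (which is empty when $k=m$), and read off the coefficients of $(z-\beta)^{n}$ for $0\le n\le k-1$; the paper's one-line proof says exactly this. One correction to your commentary: the holomorphic part of $Q$ never contributes to these coefficients, because $i\ge k$ and $n\le k-1$ force $n-i\le -1$, so there are no terms $Q_{n-i}$ with $n-i\ge 0$, only principal-part coefficients enter \eqref{eq416}, and the paper was right to absorb the holomorphic part of $Q$ times $\boldsymbol{\psi}(z)$ into $O(z-\beta)^{k}$.
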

\begin{proof} By assumption, the function $\boldsymbol{\psi}(z)$ is a pole cancellation function. Therefore, the first sum in \eqref{eq48} vanishes. The statement then follows directly from the second sum in \eqref{eq48}.
\end{proof} 

The coefficients $\boldsymbol{\hat{\varphi}}_{0}, \boldsymbol{\hat{\varphi}}_{1}, \ldots, \boldsymbol{\hat{\varphi}}_{k-1}$ of the root function (\ref{eq414}) will be called \textit{generalized Jordan vectors}. Recall that, in the special case where the operator-valued function $Q(z)$ is a matrix polynomial, the corresponding vectors are called generalized eigenvectors or Jordan vectors; see \cite[p. 24]{GLR}, \cite{B5}.

\begin{rem}\label{remark48}
If the function $\boldsymbol{\psi}(z)$ is a pole cancellation function of $Q(z)$ at $\beta$ of order $k \in \mathbb{N}$, $k < m$, then the coefficients $\boldsymbol{\psi}_{k}, \ldots, \boldsymbol{\psi}_{m}$ must satisfy conditions (\ref{eq44}) and (\ref{eq46}). These conditions will therefore be used to determine the coefficients $\boldsymbol{\psi}_{k}, \ldots, \boldsymbol{\psi}_{m}$. Subsequently, formula \eqref{eq416} will be applied to obtain the generalized Jordan vectors of the corresponding root function $\boldsymbol{\hat{\varphi}}(z)$.

According to the converse part of Theorem \ref{theorem42}, solving equations (\ref{eq44}) and (\ref{eq46}) does not guarantee that a pole cancellation function of exact order $k$ is obtained. Only if one can select a solution satisfying $\boldsymbol{\psi}_{k} \neq 0$ does one obtain a pole cancellation function of exact order $k$.

In the case $k = m$, condition \eqref{eq412} will be used to determine $\boldsymbol{\psi}_{m}\neq 0$.
\end{rem}

The following example illustrates the above theoretical considerations.
\begin{exm}
\label{example414}
Find a canonical set of pole cancellation functions at the pole $\beta = 1$ of the matrix-valued function
\[
Q(z)=\left( {\begin{array}{*{20}c}
\frac{1}{(z-1)^{2}} &  1 & z+1\\
1 &  z^{2} & z\\
z+1 &  z & \frac{1}{z-1}\\
\end{array} } \right).
\]
Also find the corresponding set of generalized Jordan vectors at the zero $\beta = 1$ of $Q^{-1}$.
\end{exm}

Observe that the pole cancellation function $\boldsymbol{\psi}(z)$ cannot be of order $k > m$, since in that case condition (a) in Definition \ref{definition24} would not be satisfied. This is why, in conditions (\ref{eq44}), (\ref{eq46}), (\ref{eq412}), and formula (\ref{eq416}), only the coefficients of the principal part of the Laurent expansion of $Q(z)$ around the pole $\beta = 1$ are used.

We determine the coefficients of the Laurent expansion by first finding the Taylor expansion (\ref{eq26}) of the function
\[
\tilde{Q}(z):=(z-1)^{2}Q(z)=\left( {\begin{array}{*{20}c}
1 & (z-1)^{2} & (z-1)^{2}(z+1)\\
(z-1)^{2} &  (z-1)^{2}z^{2} & (z-1)^{2}z\\
(z-1)^{2}(z+1) &  (z-1)^{2}z & (z-1)\\
\end{array} } \right)
\]
and then using the identity
\begin{equation}\label{eq418}
Q(z)=\frac{1}{(z-1)^{2}}\tilde{Q}(z),
\end{equation}
as explained in the Introduction. In this particular case, the first equation of identity (\ref{eq28}) takes the form
\begin{equation}\label{eq420}
Q_{j}=\frac{1}{(j+2)!}\tilde{Q}^{(j+2)}(1), \quad j=-2,-1,0,1,2.
\end{equation}
The principal part has only two coefficients, namely $Q_{j}$ for $j=-2,-1$. Therefore, we need only  $\tilde{Q}(1)$ and $\tilde{Q}^{(1)}(1)$.
\[
\tilde{Q}^{(1)}(z)=\left( {\begin{array}{*{20}c}
0 & 2(z-1) & (z-1)(3z+1)\\
2(z-1) &  2z(z-1)(2z-1) & (z-1)(3z-1)\\
(z-1)(3z+1) &  (z-1)(3z-1) & 1\\
\end{array} } \right).
\]
According (\ref{eq420}), we have
\[
Q_{-2}=\left( {\begin{array}{*{20}c}
1 & 0 & 0\\
0 &  0 & 0\\
0 &  0 & 0\\
\end{array} } \right), \quad
Q_{-1}=\left( {\begin{array}{*{20}c}
0 & 0 & 0\\
0 &  0 & 0\\
0 &  0 & 1\\
\end{array} } \right).
\]
Let us first find a pole cancellation function of order $k = 1$. Since $m = 2$, condition (\ref{eq44}) yields $s = m - k = 1$. Therefore, the sum in (\ref{eq44}) reduces to
\begin{equation}\label{eq422}
Q_{-2}\boldsymbol{\psi}_{1}=0 \Leftrightarrow \left( {\begin{array}{*{20}c}
1 & 0 & 0\\
0 &  0 & 0\\
0 &  0 &0\\
\end{array} } \right)\left( {\begin{array}{*{20}c}
\psi_{1}^{1} \\
\psi_{2}^{1} \\
\psi_{3}^{1}\\
\end{array} } \right)=\left( {\begin{array}{*{20}c}
0 \\
0 \\
0\\
\end{array} } \right). 
\end{equation}
\[
\Rightarrow\boldsymbol{\psi}_{1}=\left( {\begin{array}{*{20}c}
0 \\
\psi_{2}^{1} \\
\psi_{3}^{1}\\
\end{array} } \right), \quad \psi_{2}^{1}, \psi_{3}^{1} \in \mathbb{C}.
\]
Conditions (\ref{eq46}) yields
\[
Q_{-1}\boldsymbol{\psi}_{1}+ Q_{-2}\boldsymbol{\psi}_{2}= \left( {\begin{array}{*{20}c}
0 & 0 & 0\\
0 &  0 & 0\\
0 &  0 & 1\\
\end{array} } \right)\left( {\begin{array}{*{20}c}
0 \\
\psi_{2}^{1} \\
\psi_{3}^{1}\\
\end{array} } \right)+\left( {\begin{array}{*{20}c}
1 & 0 & 0\\
0 &  0 & 0\\
0 &  0 & 0\\
\end{array} } \right)\left( {\begin{array}{*{20}c}
\psi_{1}^{2} \\
\psi_{2}^{2} \\
\psi_{3}^{2}\\
\end{array} } \right) \neq 0.
\]
This condition is satisfied whenever 
\begin{equation}\label{eq424}
\left( {\begin{array}{*{20}c}
\psi_{1}^{2} \\
0 \\
\psi_{3}^{1}\\
\end{array} } \right) \neq 0.
\end{equation}
Therefore, a pole cancellation function of order $k=1$ or $k=2$ is given by
\[
\boldsymbol{\psi}(z)=\left( {\begin{array}{*{20}c}
0 \\
\psi_{2}^{1} \\
\psi_{3}^{1}\\
\end{array} } \right)(z-1)+\left( {\begin{array}{*{20}c}
\psi_{1}^{2} \\
\psi_{2}^{2} \\
\psi_{3}^{2}\\
\end{array} } \right) (z-1)^{2}+O(z-1)^{3},
\]
provided that the condition (\ref{eq424}) is satisfied. 

Evidently, if 
\[
\psi_{3}^{1} \neq 0 \vee \left( \psi_{2}^{1} \neq 0 \wedge \psi_{1}^{2} \neq 0\right) 
\]
the function $\boldsymbol{\psi}(z)$ is a pole cancellation function of first order.

It remains to determine the corresponding root function $\boldsymbol{\hat{\varphi}}(z)$ at the zero $\beta=1$ of $Q^{-1}(z)$. This root function must be of first order and therefore consists only of the eigenvector $\boldsymbol{\hat{\varphi}}_{0}$, which is computed from (\ref{eq416}) for $m=2$, $k=1$, and $n=0$. Hence,
\[
\boldsymbol{\hat{\varphi}}(z)=\boldsymbol{\hat{\varphi}}_{0}=Q_{-1}\boldsymbol{\psi}_{1}+Q_{-2}\boldsymbol{\psi}_{2}=
\] 
\[
=\left( {\begin{array}{*{20}c}
0 & 0 & 0\\
0 &  0 & 0\\
0 &  0 & 1\\
\end{array} } \right)\left( {\begin{array}{*{20}c}
0 \\
\psi_{2}^{1} \\
\psi_{3}^{1}\\
\end{array} } \right)+\left( {\begin{array}{*{20}c}
1 & 0 & 0\\
0 &  0 & 0\\
0 &  0 & 0\\
\end{array} } \right)\left( {\begin{array}{*{20}c}
\psi_{1}^{2} \\
\psi_{2}^{2} \\
\psi_{3}^{2}\\
\end{array} } \right)=\left( {\begin{array}{*{20}c}
\psi_{1}^{2} \\
0 \\
\psi_{3}^{1}\\
\end{array} } \right),
\]
provided that conditions (\ref{eq424}) is satisfied.

Since the task is to find a canonical set of pole cancellation functions of $Q$ at $\beta$ and the corresponding root functions of $Q^{-1}$, we now determine these functions for $k=m=2$. According to (\ref{eq42}), we have
\begin{equation}\label{eq426}
\boldsymbol{\psi}(z) =(z - 1)^{2} \boldsymbol{\psi}_{2} + (z - 1)^{3} \boldsymbol{\psi}_{3}+ O(z - 1)^{4}, \quad \boldsymbol{\psi}_{2}\neq 0.
\end{equation}

Since $k=m$, according to Corollary \ref{corollary44}, conditions (\ref{eq44}) and (\ref{eq46}) reduce to condition (\ref{eq412}) alone,
\[
Q_{-2}\boldsymbol{\psi}_{2}\neq 0.
\]
\[
\Rightarrow \left( {\begin{array}{*{20}c}
1 & 0 & 0\\
0 &  0 & 0\\
0 &  0 & 0\\
\end{array} } \right)\left( {\begin{array}{*{20}c}
\psi_{1}^{2} \\
\psi_{2}^{2}\\
\psi_{3}^{2}\\
\end{array} } \right) \neq 0.\Rightarrow \psi_{1}^{2} \neq 0.
\]
According to Corollary \ref{corollary44}, every function of the form (\ref{eq426}) with $\psi_{1}^{2} \neq 0$, and with the vector $\boldsymbol{\psi}_{3}$ chosen arbitrarily, is a pole cancellation function of order $k=2$.

Since $\boldsymbol{\psi}(z)$ is a pole cancellation function of order $k=2$, the corresponding root function $\boldsymbol{\hat{\varphi}}(z)$ of $Q^{-1}$ is also of order $k=2$. According to (\ref{eq416}),
\[
\boldsymbol{\hat{\varphi}}_{0}=Q_{-2}\psi_{2}=\left( {\begin{array}{*{20}c}
\psi_{1}^{2} \\
0\\
0\\
\end{array} } \right),
\]
\[
\boldsymbol{\hat{\varphi}}_{1}=Q_{-1}\psi_{2}+Q_{-2}\psi_{3}=\left( {\begin{array}{*{20}c}
0 \\
0 \\
\psi_{3}^{2}\\
\end{array} } \right)+ \left( {\begin{array}{*{20}c}
\psi_{1}^{3} \\
0 \\
0\\
\end{array} } \right)=\left( {\begin{array}{*{20}c}
\psi_{1}^{3}  \\
0 \\
\psi_{3}^{2}\\
\end{array} } \right).
\] 
This implies that the root function at the zero $\beta=1$ of the function $Q^{-1}(z)$ is
\[
\boldsymbol{\hat{\varphi}}(z)=\left( {\begin{array}{*{20}c}
\psi_{1}^{2} \\
0\\
0\\
\end{array} } \right)+\left( {\begin{array}{*{20}c}
\psi_{1}^{3} \\
0 \\
\psi_{3}^{2}\\
\end{array} } \right)(z-1), \quad \psi_{1}^{2}\neq 0.
\] 
\hfill $\square$







\vspace{.5cm}
\begin{footnotesize}
	\begin{tabular}{l}
Muhamed Borogovac\\
Boston Mutual Life\\
Actuarial Department\\
120 Royall St. Canton, MA 02021, USA\\
e-mail: {muhamed.borogovac@gmail.com }\\
\end{tabular}
\end{footnotesize}
\label{LastPage}
\end{document}